\documentclass[11pt]{article}
\usepackage[T1]{fontenc}
\usepackage[utf8]{inputenc}
\usepackage{lmodern}
\usepackage[margin=1in]{geometry}
\usepackage{amsmath,amssymb,amsthm}
\usepackage{microtype}
\usepackage[hidelinks]{hyperref}

\newtheorem{theorem}{Theorem}[section]
\newtheorem{lemma}[theorem]{Lemma}

\newtheorem{corollary}[theorem]{Corollary}
\theoremstyle{definition}
\newtheorem{definition}[theorem]{Definition}
\theoremstyle{remark}
\newtheorem{remark}[theorem]{Remark}
\numberwithin{equation}{section}
\allowdisplaybreaks[1]
\hypersetup{
  pdftitle={A greedy approximation proof of Bourgain's Lambda(p) selection theorem},
  pdfauthor={Will Burstein, Alex Iosevich and Ben Krause}
}

\title{Bourgain’s $\Lambda(p)$ selection theorem: a greedy proof with polynomial failure bounds}
\author{Will Burstein \and Alex Iosevich \and Ben Krause}
\date{}

\begin{document}
\maketitle

\begin{abstract}
We give a self-contained proof of Bourgain's finite $\Lambda(p)$ selection
theorem with an explicit probability bound. For every $p>2$ and $N\ge2$,
given $N$ pairwise orthogonal functions bounded by one on a probability
space, a uniformly chosen subsystem of cardinality
$\lceil N^{\frac{2}{p}}\rceil$ satisfies the $\Lambda(p)$ inequality with
probability at least $1-\frac{1}{N}$, with a constant depending only on
$p$. The inequality holds simultaneously for all complex coefficient
vectors. More generally, for every fixed $A>0$, the success probability
is at least $1-\frac{1}{N^A}$ with a constant depending only on $p$ and
$A$, and independent of $N$. The proof uses a greedy approximation in
$L^r$, with $r>2$, whose potential decreases throughout all approximation
scales. A single weighted bound on the cumulative number of updates
controls a union bound over discrete update histories, and increasing the
weight assigned to each history gives the prescribed failure exponent
without changing the cardinality exponent. 
\end{abstract}

\section{Introduction}

Let $(X,\mu)$ be a probability space, so that $\mu(X)=1$. For
$1\le q<\infty$, write
\[
  \|h\|_q=\left(\int_X|h(x)|^q\,d\mu(x)\right)^{\frac{1}{q}},
\]
and let $\|h\|_\infty$ denote the essential supremum of $|h|$.
For a positive integer $N$, set $[N]=\{1,\ldots,N\}$.
We call functions $\phi_1,\ldots,\phi_N$ pairwise orthogonal if
$\int_X\phi_i(x)\overline{\phi_j(x)}\,d\mu(x)=0$
whenever $i\ne j$.
They need not have $L^2$ norm equal to one. We allow complex-valued
functions and complex coefficients throughout.
We write $\mathbb{P}$ and $\mathbb{E}$ for probability and expectation
with respect to the random selection of indices.

Orthogonality and the bounds $\|\phi_i\|_\infty\le1$ immediately give
\[
  \left\|\sum_{i=1}^N a_i\phi_i\right\|_2^2
  =\sum_{i=1}^N|a_i|^2\|\phi_i\|_2^2
  \le\sum_{i=1}^N|a_i|^2.
\]
For $p>2$, the corresponding $L^p$ estimate can fail with a constant
independent of $N$. Bourgain's selection theorem shows that one can
always retain $N^{\frac{2}{p}}$ functions, up to rounding, and obtain such an
estimate on the resulting subsystem \cite{Bourgain}.

\begin{theorem}[Bourgain's theorem with a probability bound]\label{thm:main}
Let $p>2$, let $N$ be a positive integer, and let
$\phi_1,\ldots,\phi_N$ be pairwise orthogonal functions on a probability
space $(X,\mu)$ satisfying $\|\phi_i\|_\infty\le1$ for every $i\in[N]$.
There is a constant $C_p$, depending only on $p$, and a set
$S\subset[N]$ with
\[
  |S|=\left\lceil N^{\frac{2}{p}}\right\rceil
\]
such that
\begin{equation}\label{eq:main}
  \left\|\sum_{i\in S}a_i\phi_i\right\|_p
  \le C_p\left(\sum_{i\in S}|a_i|^2\right)^{\frac{1}{2}}
\end{equation}
for every choice of complex coefficients $(a_i)_{i\in S}$.
Moreover, if $N\ge2$ and $S$ is chosen uniformly from all subsets of
$[N]$ of cardinality $\lceil N^{\frac{2}{p}}\rceil$, then
\eqref{eq:main} holds simultaneously for every such coefficient vector
with probability at least
\[
  1-\frac{1}{N}.
\]
The constant $C_p$ in this probability statement is independent of $N$,
the probability space, and the functions.
\end{theorem}

The cardinality in this statement is exact. The probability concerns
one selected set on which every coefficient vector satisfies the
inequality. In particular, the norm bound remains fixed as $N$ grows
and the failure probability tends to zero.

\subsection{Polynomial failure probabilities and earlier methods}
\label{sec:probability-introduction}

For a set $S\subset[N]$, define its selection constant by
\begin{equation}\label{eq:selection-constant}
  K_p(S)=\sup_{\sum_{i\in S}|a_i|^2\le1}
       \left\|\sum_{i\in S}a_i\phi_i\right\|_p,
\end{equation}
where the supremum is over complex coefficients. We put
$K_p(\varnothing)=0$. A change in the numerical weight used in the
history count gives the following extension of Theorem~\ref{thm:main}.

\begin{corollary}[Arbitrary polynomial failure probability]
\label{cor:polynomial-probability}
Let $p>2$, let $A>0$, and suppose that $N\ge2$ and the functions
$\phi_1,\ldots,\phi_N$ satisfy the hypotheses of
Theorem~\ref{thm:main}. If $S$ is chosen uniformly among the subsets of
$[N]$ of cardinality $\lceil N^{\frac{2}{p}}\rceil$, then
\begin{equation}\label{eq:polynomial-probability}
  \mathbb{P}\bigl(K_p(S)\le C_{p,A}\bigr)
  \ge1-\frac{1}{N^A},
\end{equation}
where $C_{p,A}$ depends only on $p$ and $A$.
\end{corollary}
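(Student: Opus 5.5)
The plan is to run the greedy-approximation argument for Theorem~\ref{thm:main} with the union-bound weight changed, as the abstract indicates. We do not derive the corollary from the statement of the theorem. Reduce first to a statement about $K_p(S)$. Fix an auxiliary exponent $r$ with $2<p<r$ (say $r=2p$). By duality and a standard discretization (an $\varepsilon$-net argument on coefficient vectors, or equivalently bounding $\|\sum_{i\in S}a_i\phi_i\|_p$ through its level sets), it suffices to control, for every unit vector $a$, the $L^p$ norm of $F_a=\sum_{i\in S}a_i\phi_i$. Do this through a dyadic decomposition of the range of $|F_a|$. At each dyadic scale $2^k$ with $2^k\lesssim |S|^{1/2}$, approximate the level set $\{|F_a|\sim 2^k\}$ by a greedy $L^r$ approximation. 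The greedy step selects, one coefficient block at a time, the update that maximally decreases a potential of the form $\Phi=\|F_a-G\|_r^r$, where $G$ is the current approximant built from a bounded number of elementary pieces $\pm\phi_i$ with dyadic weights. The key deterministic claim, proved as in the theorem, is that this potential decreases monotonically across all scales. Consequently the total number of updates performed, weighted by scale (roughly $\sum_k m_k 2^{2k}/|S|$, where $m_k$ is the number of updates at scale $k$), is bounded by an absolute multiple of $\Phi_{\text{initial}}\lesssim 1$ whenever the outcome would give $K_p(S)>C$.

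Next set up the probabilistic bound. Each approximant is determined by a discrete update history $h$, namely the sequence of indices in $[N]$ and dyadic weights chosen. The number of histories with prescribed counts $(m_k)$ is at most $\prod_k (CN)^{m_k}$. For a fixed history, the event that the randomly chosen $S$ contains all indices it uses, and that the resulting function has large $L^p$ mass at scale $2^k$, has probability at most $(|S|/N)^{m_k}$ times a factor $\exp(-c\lambda)$ coming from the large level set. This can be computed using the orthogonality relation $\|\sum a_i\phi_i\|_2\le\|a\|_2$ together with a Chebyshev or moment estimate at exponent $r$ under the uniform selection; the hypergeometric sampling is handled by comparing to independent selectors with parameter $|S|/N\approx N^{2/p-1}$. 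The theorem's proof balances these two factors so that the union bound over histories gives $\sum_h \mathbb{P}(h)\le 1/N$. In the present setting we instead weight each history $h$ by $N^{A}\cdot w(h)$, where $w(h)$ carries an extra factor $K^{-\text{(weighted update count)}}$ with $K=K(A,p)$ large. Because the weighted update count of any history witnessing $K_p(S)>C_{p,A}$ is at least of order $\log N$ times a quantity that grows with $C_{p,A}$, the sum becomes $\le N^{-A}$ once $C_{p,A}$ is chosen large depending on $A$. The cardinality $\lceil N^{2/p}\rceil$ is untouched, since the selection density $|S|/N$ enters only through the ratio fixed in the theorem.

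Concretely, the steps are: (i) restate the reduction from $K_p(S)>C$ to the existence of some unit $a$ and scale $k$ with large level-set mass; (ii) invoke the greedy potential-decrease lemma to bound the weighted update count; (iii) bound the probability of each history and count histories; (iv) choose $C_{p,A}$ so that the per-history bound beats $N^{-A}$ times the number of histories. The main obstacle is step (iv): we must show that raising $C_{p,A}$ increases the exponent of the per-history decay linearly enough to absorb the extra $N^{A}$, uniformly in the number of scales (about $\log N$ of them). I would try first to make the threshold for a "large" event at scale $k$ equal to $C_{p,A}^{p}2^{-kp}$ times the mass. This makes the minimal number of updates of any bad history at least proportional to $C_{p,A}^{\,r-p}\log N$, so the probability decays like $N^{-c\,C_{p,A}^{r-p}}$. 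Taking $C_{p,A}^{r-p}\gtrsim A+1$ then gives the claim.
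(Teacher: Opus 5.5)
There is a genuine gap. Your scheme is not a rerun of the paper's argument, and its three central steps do not hold.

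(1) Your potential $\Phi=\|F_a-G\|_r^r$ approximates the primal function $F_a=\sum_{i\in S}a_i\phi_i$. So for $G=0$ its initial value is $\|F_a\|_r^r$ with $r>p$, which is a stronger quantity than the one you are trying to bound, and it is not $\lesssim1$. Take characters on $\mathbb{T}$ with frequencies in $[N]$ and $a_i=m^{-1/2}$. Then $|F_a|\ge\sqrt{m/2}$ on a set of measure $\frac{1}{4N}$, so $\|F_a\|_r^r\gtrsim m^{r/2}N^{-1}\approx N^{\frac{r}{p}-1}\to\infty$ for every $S$. No $N$-independent update budget follows.

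(2) Once a history is fixed, the approximant $G$ is a deterministic function. So ``$G$ has large level-set mass at scale $2^k$'' is not a random event, and it cannot supply a factor $e^{-c\lambda}$. The only randomness left is whether the $n$ indices used lie in $S$, which has probability at most $\delta^n$. Summing over the $(CN)^n$ histories gives $(C|S|)^n\gg1$, so the union bound diverges.

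(3) You assert without argument that every history witnessing $K_p(S)>C_{p,A}$ needs $\gtrsim C_{p,A}^{r-p}\log N$ updates. This is exactly where your step (iv) gets $N^{-A}$. The paper uses no lower bound on history lengths at all.

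The missing idea is the dual formulation.
\begin{itemize}
\item The paper tests $f=\sum_{i\in S}a_i\phi_i$ against $g_t=1_{E_t}f/|f|$, where $E_t=\{|f|>t\}$. This gives $t\mu(E_t)\le(\sum_{i\in S}|\langle g_t,\phi_i\rangle|^2)^{1/2}$, so everything reduces to bounding $\sum_{i\in S}|\langle g,\phi_i\rangle|^2$ simultaneously for all $\|g\|_{r'}\le1$.
\item The greedy descent acts on $P_g(v)=\frac12\|v\|_r^2-\operatorname{Re}\langle g,v\rangle+\frac12$, whose initial value is $\frac12$. It approximates $\langle g,\phi_i\rangle$ for all $i\in[N]$, not only for $i\in S$.
\item Its histories therefore determine sets $I(\mathcal{H},j)\subset[N]$ that are independent of $S$. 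The only probabilistic input is the exponential moment bound \eqref{eq:fixed-set-tail} for $|S\cap I(\mathcal{H},j)|$ with deviation $u=(k+1)H$. Its cost $\frac{H}{\lambda}\sum_j\varepsilon_j^2(k_j+1)$ is controlled by \eqref{eq:cumulative-budget}.
\end{itemize}

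The failure exponent is then bookkeeping. Replacing $H=\log(8NJ)$ by $H_A=\log(8JN^A)$ makes the union bound over all histories at most $\frac{1}{4N^A}$. Since $H_A\le(A+2)\ell$ and $\frac{\ell}{\lambda}$ already appears in the selection estimate, the cost term grows only by a factor depending on $A$. The same choices $\lambda_1=1$ and $\lambda_2=1+\frac{\alpha}{2}\log N$, followed by the layer-cake integration, give $K_p(S)\le C_{p,A}$ on an event of probability at least $1-\frac{1}{2N^A}$. Raising $C_{p,A}$ plays no role in the probability; it only absorbs the larger weight $H_A$.
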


The constant is allowed to increase with $A$; it is independent of $N$.
We prove the corollary in Section~\ref{sec:polynomial-proof} by following
the failure probability through the entire argument.

For comparison, in the real-valued setting, with the supremum in
\eqref{eq:selection-constant} taken over real coefficients, the
probabilistic estimate in Bourgain's argument is
presented in \cite[arXiv version, equation (3.16) and Theorem 3.9]{JLOV}
through a moment bound $\mathbb{E}K_p(S)^p\le B_p$, for independent
Bernoulli selection with expected cardinality $N^{\frac{2}{p}}$, where
$B_p$ depends only on $p$.
Markov's inequality yields
\[
  \mathbb{P}\bigl(K_p(S)>T\bigr)\le\frac{B_p}{T^p}
  \qquad(T>0).
\]
For fixed $T$, this displayed bound has no decay in $N$; obtaining
$\frac{1}{N}$ from it alone requires $T\ge(B_pN)^{\frac{1}{p}}$.
Theorem~\ref{thm:main} gives polynomial decay while keeping the norm
bound independent of $N$, and treats exact cardinality directly.
This comparison concerns the probability estimate obtained from that
moment bound alone.

Polynomial failure probabilities can also be recovered from
Talagrand's partition method; see \cite[Section 19.3]{TalagrandBook}.
For completeness, the relevant modification is to start the
construction in the proof of Theorem 19.3.2 at a level $n_0$ satisfying
\[
  \log N\le2^{n_0}<2\log N.
\]
For the normalized functions, the initial contribution to the weighted
partition budget is then at most $2\log N$, which is absorbed by the
existing bound of a constant multiple of $\log N$ from Theorem 19.3.4.
For any sufficiently large fixed constant $v$, the Chernoff estimate
and union bound in the proof of Theorem 19.3.3, now summed only over
$n\ge n_0$, give an exceptional probability bounded by
\[
  \sum_{n\ge n_0}\exp\left(-\frac{v2^n}{4}\right)
  \le2N^{-\frac{v}{4}},
\]
Increasing $v$ increases the resulting norm bound by a factor
independent of $N$. For an integer $1\le m\le N$, pass from Bernoulli
selection to a uniformly chosen $m$-element subset by taking selection
density $m/N$ and conditioning on cardinality $m$. That
cardinality has probability at least $\frac{1}{N+1}$, so starting with
a sufficiently large failure exponent absorbs the conditioning loss.
These are deductions from Talagrand's method, rather than the precise
probability formulation stated in his book; the contribution here is
a self-contained greedy proof that gives the quantitative estimates
and exact cardinality directly.

The mechanism is explicit: an exponential estimate controls each
discrete history, and the potential bounds the total weighted cost of
all histories used to approximate any test function. In the notation of Subsection \ref{ss:hist} below, replacing the
weight 
\[ H = \log(8NJ) \longrightarrow H_A = \log(8JN^A), \qquad A \geq 1\]
with $J=\lceil\log_2N\rceil$, reduces the failure probability to a constant
multiple of $N^{-A}$. Since this replacement changes the weight by at
most a factor depending on $A$, it changes the final norm bound by a
factor depending only on $p$ and $A$.

\subsection{The connection with \texorpdfstring{$\Lambda(p)$}{Lambda(p)} sets}

Let $\mathbb{T}=\mathbb{R}/\mathbb{Z}$ be the circle equipped with
Lebesgue measure of total mass one, and let $\mathrm{i}$ denote the
imaginary unit. The functions $x\mapsto e^{2\pi\mathrm{i}nx}$,
$n\in\mathbb{Z}$, form an orthonormal system on $\mathbb{T}$.

\begin{definition}\label{def:lambda}
For $p>2$, a set $\Lambda\subset\mathbb{Z}$ is a $\Lambda(p)$ set if
there is a finite constant $K$ such that
\begin{equation}\label{eq:lambda-definition}
  \left\|\sum_{n\in\Lambda}a_ne^{2\pi\mathrm{i}nx}
       \right\|_{L^p(\mathbb{T})}
  \le K\left(\sum_{n\in\Lambda}|a_n|^2\right)^{\frac{1}{2}}
\end{equation}
for every finitely supported family of complex coefficients
$(a_n)_{n\in\Lambda}$.
\end{definition}

For finite sets the issue is to control $K$ uniformly as the cardinality
increases. Theorem~\ref{thm:main} gives a subset of size
$\lceil N^{\frac{2}{p}}\rceil$ in every set of $N$ distinct integer frequencies,
with $K$ bounded in terms of $p$ alone, see Subsection~\ref{sec:fourier} below.

Rudin's work \cite{Rudin} established a systematic theory of
$\Lambda(p)$ sets and posed the problem of distinguishing these classes
for different exponents. Bourgain \cite{Bourgain} used the finite
selection theorem to construct, for every $p>2$, an infinite
$\Lambda(p)$ set which is not a $\Lambda(q)$ set for any $q>p$.
Talagrand \cite{Talagrand} subsequently gave an approach through the
geometry of smooth convex bodies and majorizing measures. A detailed
exposition of Bourgain's argument is given by Jung, Langowski, Ortiz,
and Vu \cite{JLOV}.

The purpose of this paper is to give a self-contained proof of the finite
selection theorem in a form that can be checked directly; its main
ingredient is a quantitative estimate for random restrictions of
coefficient sums. We prove that estimate by a finite-alphabet greedy
approximation and a countable union bound. Related greedy descent
methods for convex functions on Banach spaces are developed by
Temlyakov \cite{Temlyakov}. Here the feature needed for selection is
that the same potential controls the entire sequence of approximation
scales. The existence theorem and its optimal exponent are Bourgain's.
Our contribution lies in this greedy proof and in the explicit tracking
of its probability estimates. The proof gives the bounds in
Theorem~\ref{thm:main} and Corollary~\ref{cor:polynomial-probability},
with the comparison to the classical moment estimate described in
Section~\ref{sec:probability-introduction}.

\subsection{A common method and companion papers}

This paper is the first in a planned series developing a common approach
to selection and sampling problems for bounded orthogonal systems.
The organizing principle is to retain the state of an approximation
procedure as the accuracy changes, so that a single nonnegative potential
controls the total weighted cost of the updates over all scales.
Encoding the updates by discrete records then allows simultaneous
probability estimates to control the entire family of approximations, where -- crucially -- the geometric weights ensure that carrying an earlier update into later approximations costs only a constant multiple of its original weight.

In the present paper, this principle takes the form of greedy descent
in a smooth $L^r$ space with $r>2$. Companion manuscripts develop related
constructions using online prediction, in which a probability
distribution over elementary predictors is updated after an incorrect
prediction. Relative entropy supplies the potential in those arguments,
and records of the necessary corrections supply the discrete codes.
Thus the common ingredient is the use of one weighted update budget
throughout the approximation procedure.

The applications in these manuscripts concern restricted isometry
estimates for row restrictions of real bounded orthogonal matrices,
both through existential selection and through independent Bernoulli
sampling, and comparisons between the $L^1$ and $L^2$ norms on spans
of large subsets of bounded orthonormal systems. Restricted isometry
means that the normalized row restriction approximately preserves the
Euclidean norm of every vector with a prescribed bound on the number
of its nonzero coordinates. The comparison results in this direction
include Bourgain's restricted isometry estimate \cite{Bourgain2014}
and the subsequent estimate of Haviv and Regev \cite{HavivRegev};
for the comparison of norms on large spans, the relevant result is
Theorem~3 of Gu\'edon, Mendelson, Pajor, and Tomczak-Jaegermann
\cite{GMPT2008}. The companion manuscripts contain proposed improvements
of logarithmic factors in these estimates. Those improvements remain
under verification; their precise hypotheses, proofs, and novelty
comparisons belong to the respective papers.

Bourgain's finite $\Lambda(p)$ selection theorem provides the starting
point for the series. The proof below exhibits the common principle
directly: one greedy descent budget controls all approximation scales,
and a countable union bound gives the required random restriction
estimate. The development of this approach benefited in part from
mathematical suggestions generated in discussions with ChatGPT.
The authors take responsibility for the mathematical statements and
proofs.

\subsection{Notation and the structure of the proof}

For $1<q<\infty$, let $q'=\frac{q}{q-1}$ be the conjugate exponent,
so that $\frac{1}{q}+\frac{1}{q'}=1$. Whenever the product is
integrable, use the pairing
\[
  \langle u,v\rangle=\int_Xu(x)\overline{v(x)}\,d\mu(x).
\]
This pairing is linear in the first variable and conjugate linear in
the second. We write $\operatorname{Re}z$ and $\operatorname{Im}z$ for
the real and imaginary parts of a complex number $z$, and $1_E$ for
the function which is one on a set $E$ and
zero elsewhere.
All logarithms are natural, except for the explicitly indicated
$\log_2$. The symbols $C_r$ and $C_p$ denote positive constants
depending only on the indicated exponent; their values may increase
from one occurrence to the next.
The symbols $C_{r,A}$ and $C_{p,A}$ have the same convention, with
dependence on both indicated parameters.

The random restriction estimate is stated and proved in
Section~\ref{sec:selection}. Its deterministic part starts from a
function $g\in L^{r'}$ and approximates the finitely many numbers
$\langle g,\phi_i\rangle$. At accuracy $\varepsilon_j=2^{-j}$, let
$n_j$ be the number of new updates and let
$k_j=\sum_{h=1}^j n_h$ be the total number of updates made so far;
a nonnegative potential gives a bound on
$\sum_j n_j\varepsilon_j^2$, and since the accuracies decrease
geometrically, this also bounds $\sum_j k_j\varepsilon_j^2$.
The latter sum is precisely what appears when we add the probability
estimates over all scales.

In Section~\ref{sec:main-proof}, we apply the random restriction
estimate twice, with $r=2p$. One application gives a useful bound
for moderate values of a selected sum, while the other controls its
largest possible values. Integrating these two bounds proves
Theorem~\ref{thm:main}. Each of the two applications fails with
probability at most $\frac{1}{4N}$, and the remaining argument is
deterministic on their intersection. Section~\ref{sec:polynomial-proof}
then makes the dependence on the desired failure exponent explicit.

\section{A uniform random restriction estimate}\label{sec:selection}

The following lemma does not require orthogonality; only the
$L^r$ bounds on the functions enter its proof.

\begin{lemma}\label{lem:selection}
Let $r>2$, let $N\ge2$, and suppose that
$\phi_1,\ldots,\phi_N\in L^r(X,\mu)$ satisfy $\|\phi_i\|_r\le1$.
Fix an integer $m$ with $1\le m\le N$, and choose $S$ uniformly from
the $m$-element subsets of $[N]$. Set
\[
  \delta:=\frac{m}{N},\qquad \ell:=\log(eN),\qquad
  \beta_\delta(\lambda)
  :=\frac{\log\bigl(1+\delta(e^\lambda-1)\bigr)}{\lambda}
  \quad(\lambda>0).
\]
For every fixed $\lambda$ with $1\le\lambda\le\ell$, there is an
event of probability at least $1-\frac{1}{4N}$ on which
\begin{equation}\label{eq:selection}
\begin{split}
  \sum_{i\in S}|\langle g,\phi_i\rangle|^2
  \le C_r\biggl(&\frac{\ell}{\lambda}\|g\|_{r'}^2 +\beta_\delta(\lambda)
       \sum_{i=1}^N|\langle g,\phi_i\rangle|^2\biggr)
\end{split}
\end{equation}
simultaneously for every $g\in L^{r'}(X,\mu)$.
The constant $C_r$ is independent of $N,m,\lambda$, and the functions.
\end{lemma}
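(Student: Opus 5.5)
The plan is to normalize $\|g\|_{r'}=1$ and write $x_i=\langle g,\phi_i\rangle$. Note $|x_i|\le\|g\|_{r'}\|\phi_i\|_r\le1$. The proof has three ingredients:
\begin{itemize}
\item a probabilistic estimate for a \emph{fixed} set of indices;
\item a deterministic greedy construction that, for each $g$, produces a discrete history whose length is controlled by a potential;
\item a union bound over all possible histories.
\end{itemize}

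\emph{Probabilistic input.} For a fixed $T\subset[N]$ with $|T|=k$, the indicators $1_{\{i\in S\}}$ under uniform $m$-subset selection are negatively associated. Hence
\[
\mathbb{E}e^{\lambda|S\cap T|}\le\bigl(1+\delta(e^\lambda-1)\bigr)^k=e^{\lambda\beta_\delta(\lambda)k}.
\]
Markov's inequality then gives
\[
\mathbb{P}\Bigl(|S\cap T|\ge\beta_\delta(\lambda)k+\tfrac{s}{\lambda}\Bigr)\le e^{-s}.
\]
I will use this with $s=H\cdot(\text{code length})$, where $H=\log(8NJ)$ and $J=\lceil\log_2N\rceil$. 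Since $\lambda\le\ell$, $\ell\gtrsim\log N$ and $J\le 2\log_2 N$, we have $H/\lambda\lesssim\ell/\lambda$, which matches the first term on the right of \eqref{eq:selection}.

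\emph{Greedy step.} Work in $L^r$ with $F(u)=\tfrac12\|u\|_r^2$. For $r\ge2$ this is $2$-uniformly smooth:
\[
F(u+v)\le F(u)+\operatorname{Re}\langle v,F'(u)\rangle+\tfrac{r-1}{2}\|v\|_r^2 .
\]
Take the potential $P(u)=F(u)-\operatorname{Re}\langle u,g\rangle$. It satisfies $P(0)=0$ and $P\ge-\tfrac12\|g\|_{r'}^2$ by Hölder and Young.

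Run scales $\varepsilon_j=2^{-j}$ for $j=1,\dots,J$, never resetting $u$. At scale $j$, while some $i$ has residual $|x_i-\langle\phi_i,F'(u)\rangle|>C_0\varepsilon_j$, add $v=\varepsilon_j\theta\phi_i$. Here $\theta$ is taken from a fixed finite set of unit complex numbers (say $8$th roots of unity) that approximately aligns with the residual. Smoothness gives a decrease of $P$ by at least $c_r\varepsilon_j^2$ per update, once $C_0=C_0(r)$ is large. Therefore
\[
\sum_j n_j\varepsilon_j^2\le C_r .
\]
Since the $\varepsilon_j$ decrease geometrically, this also gives $\sum_jk_j\varepsilon_j^2\le C_r$, where $k_j=n_1+\dots+n_j$.

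After stage $j$, the function $u$, and hence the vector $y^{(j)}_i=\langle\phi_i,F'(u)\rangle$, depends only on the history $h_j$, namely the list of pairs $(i_t,\theta_t)$ made so far. Moreover $|x_i-y^{(j)}_i|\le C_0\varepsilon_j$ for every $i$. The number of histories of length $k$ is at most $(8N)^k$.

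\emph{Union bound and summation.} For each scale $j$ and each history $h$ of length $k$, let $T_j(h)=\{i:|y^{(j)}_i|>2C_0\varepsilon_j\}$, which is determined by $h$. Apply the tail bound with $s=H(k+1)$. Summing over $j\le J$ and over histories, the total failure probability is at most
\[
\sum_{j}\sum_k(8N)^ke^{-H(k+1)}\le\tfrac1{4N}
\]
after adjusting constants in $H$. On the good event, for every $g$ and every $j$,
\[
|S\cap T_j|\le\beta|T_j|+\frac{C H(k_j+1)}{\lambda}.
\]
Every $i$ with $|x_i|>3C_0\varepsilon_j$ lies in $T_j$, and $T_j\subset\{i:|x_i|>C_0\varepsilon_j\}$. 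Decomposing dyadically,
\[
\sum_{i\in S}|x_i|^2\lesssim\sum_j\varepsilon_j^2|S\cap T_j|+\text{(tail)},
\]
where the tail consists of indices with $|x_i|\lesssim\varepsilon_J\le 1/N$. The tail contributes at most $m\varepsilon_J^2\lesssim 1\le\ell/\lambda$. The main terms give
\[
\beta\sum_j\varepsilon_j^2|T_j|\lesssim\beta\sum_i|x_i|^2,
\qquad
\frac{H}{\lambda}\sum_j\varepsilon_j^2(k_j+1)\lesssim\frac{\ell}{\lambda},
\]
which is \eqref{eq:selection}.

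\emph{Main obstacle.} The delicate step is the greedy decrease estimate with a finite alphabet of directions. I must check that approximating $\theta$ by a root of unity still yields a decrease of order $\varepsilon_j^2$ whenever the residual exceeds $C_0\varepsilon_j$. I also need to handle the fact that residuals are measured against $F'(u)\in L^{r'}$ rather than against $u$ itself, so that $T_j$ is genuinely a function of the discrete history alone.
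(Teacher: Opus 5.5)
Your proposal follows the paper's proof essentially step for step. The shared steps are: greedy descent on $\frac12\|u\|_r^2-\operatorname{Re}\langle u,g\rangle$ with the state retained across the scales $\varepsilon_j=2^{-j}$; the conversion of $\sum_j n_j\varepsilon_j^2\le C_r$ into $\sum_j k_j\varepsilon_j^2\le C_r$; a union bound over discrete histories with deviation $(k+1)H/\lambda$; and the sandwich $\{|x_i|>3C_0\varepsilon_j\}\subset T_j\subset\{|x_i|>C_0\varepsilon_j\}$ feeding a dyadic decomposition. Your variants are harmless. Negative association replaces the direct expansion giving $\mathbb{P}(A\subset S)\le\delta^{|A|}$. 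Eighth roots of unity replace $\{\pm1,\pm\mathrm{i}\}$. A unit step with threshold $C_0\varepsilon_j$ replaces the step $\varepsilon_j/(\sqrt2(r-1))$ with threshold $\varepsilon_j$.

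There is, however, one claim that is false as written. You assert that the state after stage $j$ is determined by the list of pairs $(i_t,\theta_t)$, so that there are at most $(8N)^k$ histories of length $k$. But the $t$-th update is $\varepsilon_{j_t}\theta_t\phi_{i_t}$, whose size depends on the scale $j_t$ at which it was made. Knowing the current stage $j$ does not reveal how the $k_j$ updates were distributed among scales $1,\dots,j$. For example, the one-entry list $((1,\theta))$ at stage $2$ could mean the state $\frac12\theta\phi_1$ or $\frac14\theta\phi_1$. So $T_j(h)$ is not a function of your $h$, and your union bound does not control the set $T_j$ actually produced by the procedure.

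The repair is exactly the paper's encoding: record triples $(j_t,i_t,\theta_t)$. This gives $(8NJ)^k$ records of length $k$. Take for instance $H=\log(16NJ)$; then $J\sum_{k\ge0}(8NJ)^ke^{-(k+1)H}=\frac{1}{8N}$, and still $H\le3\ell$ because $J\le N$. (Appending the counts $(n_1,\dots,n_j)$ to the list of pairs would also work.) With this change the rest of your argument goes through.

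The point you flag as the main obstacle is routine.
\begin{itemize}
\item Write the residual consistently as $\rho_i=\langle g-F'(u),\phi_i\rangle$. Your $x_i-y_i$ mixes $\langle g,\phi_i\rangle$ with $\langle\phi_i,F'(u)\rangle=\overline{\langle F'(u),\phi_i\rangle}$.
\item The smoothness inequality then bounds the change in $P$ by $-\varepsilon_j\operatorname{Re}(\theta\overline{\rho_i})+\frac{r-1}{2}\varepsilon_j^2$.
\item An eighth root of unity with $\operatorname{Re}(\theta\overline{\rho_i})\ge\cos(\pi/8)|\rho_i|$ makes this at most $-\frac{r-1}{2}\varepsilon_j^2$ once $C_0\ge2(r-1)$. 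Since $P\ge-\frac12$, this also forces each stage to terminate.
\item $F'(u)$ is a fixed function of $u$, hence of the corrected record, so $T_j$ is determined by it.
\end{itemize}
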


The coefficient in this formulation satisfies
\begin{equation}\label{eq:beta-upper}
  0<\beta_\delta(\lambda)
  \le\min\left\{1,\frac{\delta(e^\lambda-1)}{\lambda}\right\}.
\end{equation}
Indeed, $1+\delta(e^\lambda-1)\le e^\lambda$ because $\delta\le1$,
and $\log(1+u)\le u$ for $u\ge0$. Thus the lemma also implies
the simpler estimate obtained by replacing $\beta_\delta(\lambda)$
with $\frac{\delta(e^\lambda-1)}{\lambda}$. Retaining the logarithm
avoids an unnecessary loss when the exponential factor is large.

The word ``simultaneously'' is essential: after choosing a set in the
event from the lemma, the test function $g$ may depend on that set.
The parameter $\lambda$, however, is fixed before the random choice.
We will later intersect the events for two specified values of
$\lambda$.

\subsection{The smoothness inequality}

Fix $r>2$. For $v\in L^r$, define
\[
  \Psi(v)=\frac{1}{2}\|v\|_r^2,
  \qquad
  G(v)=
  \begin{cases}
    \|v\|_r^{2-r}|v|^{r-2}v,&v\ne0,\\
    0,&v=0,
  \end{cases}
\]
where $v=0$ means equality to zero almost everywhere. A direct
calculation gives $G(v)\in L^{r'}$ and
\begin{equation}\label{eq:duality-map}
  \|G(v)\|_{r'}=\|v\|_r,
  \qquad
  \operatorname{Re}\langle G(v),v\rangle=\|v\|_r^2.
\end{equation}
We regard complex $L^r$ as a real vector space when differentiating
real-valued functions. In particular,
\[
  \left.\frac{d}{dt}\Psi(v+th)\right|_{t=0}
  =\operatorname{Re}\langle G(v),h\rangle
  \qquad(t\in\mathbb{R}).
\]
We begin with the following technical lemma, which follows from direct computation; for completeness we provide the details.
\begin{lemma}\label{lem:smoothness}
For every $v,h\in L^r$,
\begin{equation}\label{eq:smoothness}
  \Psi(v+h)
  \le\Psi(v)+\operatorname{Re}\langle G(v),h\rangle
       +\frac{r-1}{2}\|h\|_r^2.
\end{equation}
\end{lemma}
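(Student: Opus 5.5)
We will prove \eqref{eq:smoothness} by restricting $\Psi$ to the line through $v$ in direction $h$. The plan is to bound the second derivative along that line by $(r-1)\|h\|_r^2$ and then integrate twice. Set $f(t)=\Psi(v+th)$ for $t\in[0,1]$ and $u_t=v+th$. The preceding display gives $f'(t)=\operatorname{Re}\langle G(u_t),h\rangle$. The target inequality is then $f(1)\le f(0)+f'(0)+\frac{r-1}{2}\|h\|_r^2$, which follows from Taylor's formula with integral remainder once we know $f''(t)\le(r-1)\|h\|_r^2$ wherever $f''$ exists and $f'$ is absolutely continuous.

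To compute $f''$, write $f=\frac12\Phi^{2/r}$ with $\Phi(t)=\int|u_t|^r\,d\mu$. Differentiation under the integral is legitimate because $r>2$: the map $z\mapsto|z|^r$ is $C^2$ on $\mathbb{C}\cong\mathbb{R}^2$, and its derivatives are dominated by $C(|v|+|h|)^{r-2}|h|^2\in L^1$ by H\"older. This gives
\[
\Phi'(t)=r\int|u_t|^{r-2}\operatorname{Re}(u_t\bar h)\,d\mu,\qquad
\Phi''(t)\le r(r-1)\int|u_t|^{r-2}|h|^2\,d\mu,
\]
where the second bound uses that the real Hessian of $|z|^r$ has largest eigenvalue $r(r-1)|z|^{r-2}$. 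Next, for $\Phi(t)>0$,
\[
f''=\frac1r\Phi^{\frac2r-1}\Phi''+\frac1r\Bigl(\frac2r-1\Bigr)\Phi^{\frac2r-2}(\Phi')^2 .
\]
The second term is nonpositive since $r>2$, so we may drop it. Applying H\"older with exponents $\frac{r}{r-2}$ and $\frac r2$ gives $\int|u_t|^{r-2}|h|^2\le\|u_t\|_r^{r-2}\|h\|_r^2=\Phi^{1-\frac2r}\|h\|_r^2$. Hence $f''(t)\le(r-1)\|h\|_r^2$.

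The main obstacle is the degenerate set of times where $u_t=0$, since there $\Phi=0$ and $\Phi^{2/r}$ need not be twice differentiable. If $h$ is not a multiple of $v$, then $u_t\neq0$ for all $t$, and $f\in C^2$. If $h=cv$, the whole problem is one-dimensional and explicit. When $c$ is real, $f(t)=\frac12(1+ct)^2\|v\|_r^2$, and the inequality holds with room to spare because $r-1\ge1$. For nonreal $c$, $u_t\neq0$ again. A cleaner alternative that avoids all of this casework is to prove the inequality for $v$ replaced by $v_\eta=v+\eta w$, with $w$ chosen so that $u_t$ never vanishes, and then let $\eta\to0$. This works because $\Psi$ is continuous and $G(v_\eta)\to G(v)$ weakly in the pairing with $h$ whenever $v\ne0$. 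If $v=0$, the claim reads $\frac12\|h\|_r^2\le\frac{r-1}{2}\|h\|_r^2$, which holds trivially. I would use the case split, since it is short.
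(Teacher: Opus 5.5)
Your proposal is correct and follows essentially the same route as the paper: differentiate $\int|v+th|^r\,d\mu$ under the integral, bound its second derivative by $r(r-1)\int|u_t|^{r-2}|h|^2\,d\mu$, drop the nonpositive $(\Phi')^2$ term in the chain rule for $\frac12\Phi^{2/r}$, apply H\"older, and conclude by Taylor's formula with integral remainder. The only difference is the degenerate case, which the paper settles in one line: $v+t_0h=0$ forces $v+th=(t-t_0)h$, so $\Psi(v+th)=\frac12(t-t_0)^2\|h\|_r^2$. Your four-way case split reaches the same conclusion, and it genuinely needs the separate $v=0$ case you record, because when $v=0$ the function $h$ is not a multiple of $v$, yet $u_0=0$.
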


\begin{proof}
For real $t$, put $w(t)=v+th$ and
$A(t)=\int_X|w(t)|^r\,d\mu$. Differentiation under the integral gives
\[
  A'(t)=r\int_X|w(t)|^{r-2}
             \operatorname{Re}(\overline{w(t)}h)\,d\mu.
\]
The pointwise second derivative of $|w(t)|^r$ is
\[
  r|w(t)|^{r-2}|h|^2
  +r(r-2)|w(t)|^{r-4}
       \bigl(\operatorname{Re}(\overline{w(t)}h)\bigr)^2,
\]
and at points where $w(t)=0$, the second term is interpreted as zero;
this is its continuous extension, since $r>2$. The inequality
$|\operatorname{Re}(\overline{w(t)}h)|\le |w(t)||h|$ therefore yields the upper estimate
\begin{equation}\label{eq:A-second}
  A''(t)\le r(r-1)\int_X|w(t)|^{r-2}|h|^2\,d\mu.
\end{equation}
These differentiations are justified by dominated convergence on
every bounded interval of $t$: the second derivatives are bounded by
a constant times $(|v|+|h|)^{r-2}|h|^2$, which is integrable by
H\"older's inequality.

Whenever $A(t)>0$, the chain rule yields
\begin{align*}
  \frac{d^2}{dt^2}\Psi(w(t))
  & =\frac{2-r}{r^2}A(t)^{\frac{2}{r}-2}(A'(t))^2
      +\frac{1}{r}A(t)^{\frac{2}{r}-1}A''(t)\\
  &\le(r-1)\|w(t)\|_r^{2-r}
       \int_X|w(t)|^{r-2}|h|^2\,d\mu\\
  &\le(r-1)\|h\|_r^2;
\end{align*}
to verify these inequalities, note that since $r > 2$, the first term in the first line is nonpositive, while the last line uses
\[
  \int_X|w(t)|^{r-2}|h|^2\,d\mu
  \le\|w(t)\|_r^{r-2}\|h\|_r^2.
\]
If $w(t_0)=0$ in $L^r$ for some $t_0$, then
$w(t)=(t-t_0)h$ for all $t$, and
$\Psi(w(t))=\frac{1}{2}(t-t_0)^2\|h\|_r^2$,
so the same second derivative bound holds also in this case. Taylor’s formula with integral remainder proves \eqref{eq:smoothness}.
\end{proof}

\subsection{A greedy approximation retained across all scales}

Fix $g\in L^{r'}$ with $\|g\|_{r'}\le1$, and set
\[
  t_i=\langle g,\phi_i\rangle\qquad(i\in[N]).
\]
H\"older's inequality gives $|t_i|\le1$. Let
\[
  J=\lceil\log_2N\rceil,
  \qquad\varepsilon_j=2^{-j}\quad(1\le j\le J),
\]
so that in particular, $\varepsilon_J\le\frac{1}{N}$.
The approximation uses the four-element set
\[
  \Omega=\{1,-1,\mathrm{i},-\mathrm{i}\}.
\]
For every complex number $z$, one can choose $\omega\in\Omega$ such that
\begin{equation}\label{eq:phase}
  \operatorname{Re}(\overline{\omega}z)
  \ge\frac{|z|}{\sqrt{2}}:
\end{equation}
the largest of $|\operatorname{Re}z|$ and
$|\operatorname{Im}z|$ is at least $\frac{|z|}{\sqrt{2}}$, and the
four choices of $\omega$ select either signed component.

Start with $v=0$. At scale $j$, check whether there is an index $i$
for which
\[
  |t_i-\langle G(v),\phi_i\rangle|>\varepsilon_j.
\]
If there is, take the smallest such index, put
$z=t_i-\langle G(v),\phi_i\rangle$, and choose $\omega$ as in
\eqref{eq:phase}, and replace $v$ by
\begin{equation}\label{eq:update}
  v+\frac{\varepsilon_j}{\sqrt{2}(r-1)}\omega\phi_i.
\end{equation}
Fix an ordering of $\Omega$ to resolve any ties, and continue at scale $j$
until every coordinate error is at most $\varepsilon_j$. Then proceed
to scale $j+1$, starting from the current value of $v$.

To prove termination and control all the updates, define
\[
  P_g(v)=\frac{1}{2}\|v\|_r^2
          -\operatorname{Re}\langle g,v\rangle+\frac{1}{2}.
\]
Since $\|g\|_{r'}\le1$,
\begin{equation}\label{eq:potential-positive}
  P_g(v)\ge\frac{1}{2}\|v\|_r^2-\|v\|_r+\frac{1}{2}
     =\frac{1}{2}(\|v\|_r-1)^2\ge0,
  \qquad P_g(0)=\frac{1}{2}.
\end{equation}
And, for an update at scale $j$, with
$\eta=\frac{\varepsilon_j}{\sqrt{2}(r-1)}$, by
Lemma~\ref{lem:smoothness},
\begin{align*}
  P_g(v+\eta\omega\phi_i)-P_g(v)
  &\le -\eta\operatorname{Re}
      \bigl(\overline{\omega}\langle g-G(v),\phi_i\rangle\bigr)
       +\frac{r-1}{2}\eta^2\|\phi_i\|_r^2\\
  &\le-\frac{\eta}{\sqrt{2}}
        |t_i-\langle G(v),\phi_i\rangle|
       +\frac{r-1}{2}\eta^2\\
  &<-\frac{\varepsilon_j^2}{4(r-1)}.
\end{align*}
In particular, an infinite number of updates at any one scale would force the
potential to become negative, so every scale terminates.

Let $n_j$ be the number of updates performed at scale $j$, let $v_j$
be the state when that scale is completed, and let
$k_j=\sum_{h=1}^j n_h$. The construction gives
\begin{equation}\label{eq:approximation}
  \max_{i\in[N]}|t_i-\langle G(v_j),\phi_i\rangle|
  \le\varepsilon_j.
\end{equation}
More significantly, adding all potential decreases and using
\eqref{eq:potential-positive} gives
\begin{equation}\label{eq:budget}
  \sum_{j=1}^Jn_j\varepsilon_j^2\le2(r-1).
\end{equation}
There is just one initial potential of size $\frac{1}{2}$ in this
calculation, because the state is retained between successive scales.

As alluded to above, the count of a history at scale $j$ will involve $k_j$, not merely
$n_j$, but the geometric decrease of the accuracies gives the needed
conversion:
\begin{align}
  \sum_{j=1}^J\varepsilon_j^2k_j
  &=\sum_{h=1}^Jn_h\sum_{j=h}^J\varepsilon_j^2\notag\\
  &\le\frac{4}{3}\sum_{h=1}^Jn_h\varepsilon_h^2
   \le\frac{8}{3}(r-1).\label{eq:cumulative}
\end{align}
Since $\sum_{j=1}^{\infty}\varepsilon_j^2=\frac{1}{3}$, we also have
\begin{equation}\label{eq:cumulative-budget}
  \sum_{j=1}^J\varepsilon_j^2(k_j+1)
  \le\frac{8r-7}{3};
\end{equation}
this is the bound that will absorb the entire cost of the histories.

\subsection{An exponential estimate for sampling without replacement}

Fix a deterministic set $I\subset[N]$. For the random $m$-element
set $S$ and any $\lambda>0$, we claim that
\begin{equation}\label{eq:mgf}
  \mathbb{E}\exp(\lambda|S\cap I|)
  \le\bigl(1+\delta(e^\lambda-1)\bigr)^{|I|}
  =\exp\bigl(\lambda\beta_\delta(\lambda)|I|\bigr),
  \qquad\delta=\frac{m}{N}.
\end{equation}
To see this without assuming independence, expand the product:
\begin{align*}
  \mathbb{E}\exp(\lambda|S\cap I|)
  &=\mathbb{E}\prod_{i\in I}
       \bigl(1+(e^\lambda-1)1_{\{i\in S\}}\bigr)\\
  &=\sum_{A\subset I}(e^\lambda-1)^{|A|}\mathbb{P}(A\subset S).
\end{align*}
For $a=|A|\le m$, counting the subsets containing $A$ gives
\[
  \mathbb{P}(A\subset S)
  =\frac{\binom{N-a}{m-a}}{\binom{N}{m}}
  =\prod_{h=0}^{a-1}\frac{m-h}{N-h}
  \le\left(\frac{m}{N}\right)^a.
\]
The empty product is one, and the probability is zero when $a>m$.
It follows that
\[
  \mathbb{E}\exp(\lambda|S\cap I|)
  \le\bigl(1+\delta(e^\lambda-1)\bigr)^{|I|}
  =\exp\bigl(\lambda\beta_\delta(\lambda)|I|\bigr),
\]
which proves \eqref{eq:mgf}. In particular, Markov's inequality gives,
for every $u\ge0$,
\begin{equation}\label{eq:fixed-set-tail}
  \mathbb{P}\left(
    \lambda|S\cap I|>\lambda\beta_\delta(\lambda)|I|+u\right)
  \le e^{-u}.
\end{equation}

\subsection{Counting every possible history}\label{ss:hist}

An update in \eqref{eq:update} is specified by three discrete choices:
its scale $j\in\{1,\ldots,J\}$, its index $i\in[N]$, and its phase
$\omega\in\Omega$. A history of length $k$ is any ordered list
\[
  \mathcal{H}=((j_1,i_1,\omega_1),\ldots,(j_k,i_k,\omega_k))
\]
of such triples. We include all these lists, even those which cannot
arise from the approximation procedure, so in particular there are exactly
$(4NJ)^k$ lists of length $k$. Define their states by
\[
  v(\mathcal{H})
  =\sum_{a=1}^k
      \frac{\varepsilon_{j_a}}{\sqrt{2}(r-1)}\omega_a\phi_{i_a},
\]
with the convention that the empty history has length zero and state zero. The state is
determined completely by the list; no continuously
varying coefficient needs to be encoded.

For every history $\mathcal{H}$ and every $j\in\{1,\ldots,J\}$, define
the deterministic set
\[
  I(\mathcal{H},j)
  =\left\{i\in[N]:
        |\langle G(v(\mathcal{H})),\phi_i\rangle|
             >2\varepsilon_j\right\}.
\]
Put $H=\log(8NJ)$ and apply \eqref{eq:fixed-set-tail} to this set with
$u=(k+1)H$, where $k$ is the length of $\mathcal{H}$.
Since there are $J$ scales to test, the probability that any history and scale violate the resulting
inequality is at most
\begin{align}\label{eq:history-failure}
  J\sum_{k=0}^{\infty}(4NJ)^k e^{-(k+1)H}
  &=\frac{Je^{-H}}{1-4NJ e^{-H}}=\frac{1}{4N}.
\end{align}
Thus there is an event of probability at least
$1-\frac{1}{4N}$ on which
\begin{equation}\label{eq:uniform-histories}
\begin{split}
  |S\cap I(\mathcal{H},j)|
  \le{}&\beta_\delta(\lambda)
                     |I(\mathcal{H},j)|+\frac{(k+1)H}{\lambda}
\end{split}
\end{equation}
for every history of length $k$ and every scale $j$; we will only apply this when $1 \leq \lambda \leq \ell$.

This event was defined using the functions $\phi_i$ and the countable
collection of all finite histories; it was not defined using a target
function $g$ so it can be fixed before any target is chosen. This
also avoids any measurability question involving an uncountable
intersection over test functions.

\subsection{From histories to squared coefficients}

Fix a set $S$ in the event from \eqref{eq:uniform-histories},
take any $g\in L^{r'}$ with $\|g\|_{r'}\le1$, and perform the
approximation procedure. Retain its notation $t_i,v_j,k_j$ and define
\[
  I_j=\{i\in[N]:|\langle G(v_j),\phi_i\rangle|>2\varepsilon_j\};
\]
the approximation estimate \eqref{eq:approximation} gives
\begin{equation}\label{eq:inclusions}
  \{i\in[N]:|t_i|>3\varepsilon_j\}
  \subset I_j
  \subset\{i\in[N]:|t_i|>\varepsilon_j\}
\end{equation}
where both inclusions follow from the triangle inequality. The first ensures
that the sets $I_j$ detect large true coefficients; the second prevents
them from counting coefficients that are too small.

For a real number $u$ with $0\le u\le1$, the dyadic estimate
\begin{equation}\label{eq:dyadic}
  u^2\le9\varepsilon_J^2
       +36\sum_{j=1}^J\varepsilon_j^2
                          1_{\{u>3\varepsilon_j\}}
\end{equation}
holds, as we quickly verify: if $u\le3\varepsilon_J$, the first term suffices; otherwise, let $j$ be the first index such that $u>3\varepsilon_j$, noting that $j \geq 2$ since $u\le1<3\varepsilon_1$; by minimality
$u\le3\varepsilon_{j-1}=6\varepsilon_j$.

If we apply \eqref{eq:dyadic} to each $|t_i|$, $i\in S$, and use
\eqref{eq:inclusions}, then since $|S|\le N$ and
$\varepsilon_J\le\frac{1}{N}$,
\[
  \sum_{i\in S}|t_i|^2
  \le\frac{9}{N}
       +36\sum_{j=1}^J\varepsilon_j^2|S\cap I_j|.
\]
The state $v_j$ is represented by its actual cumulative history,
whose length is $k_j$, so \eqref{eq:uniform-histories} yields the bound
\begin{equation}\label{eq:sum-intermediate}
\begin{split}
  \sum_{i\in S}|t_i|^2
  \le{}&\frac{9}{N}
      +36\beta_\delta(\lambda)
         \sum_{j=1}^J\varepsilon_j^2|I_j|\\
      &+\frac{36H}{\lambda}
         \sum_{j=1}^J\varepsilon_j^2(k_j+1).
\end{split}
\end{equation}
The last sum is bounded by \eqref{eq:cumulative-budget}, while for the other
sum, the second inclusion in \eqref{eq:inclusions} and the geometric decay of $\{ \varepsilon_j \}$ imply
\begin{align}
  \sum_{j=1}^J\varepsilon_j^2|I_j|
  &\le\sum_{i=1}^N
         \sum_{\{j:\,\varepsilon_j<|t_i|\}}\varepsilon_j^2\notag\\
  &\le\frac{4}{3}\sum_{i=1}^N|t_i|^2.\label{eq:mean-levels}
\end{align}

Combining \eqref{eq:sum-intermediate}, \eqref{eq:cumulative-budget},
and \eqref{eq:mean-levels} yields
\begin{equation}\label{eq:normalized-selection}
\begin{split}
  \sum_{i\in S}|t_i|^2
  \le{}&\frac{9}{N}
     +48\beta_\delta(\lambda)
                         \sum_{i=1}^N|t_i|^2\\
     &+12(8r-7)\frac{H}{\lambda}.
\end{split}
\end{equation}
For $N\ge2$, we have $J\le N$, and hence
\[
  H=\log(8NJ)\le3\log(eN)=3\ell.
\]
Since we are only interested in the case where $\frac{\ell}{\lambda}\ge1$, we can consequently absorb
$\frac{9}{N}$ into a constant times $\frac{\ell}{\lambda}$.
This is the only point where the upper restriction $\lambda\le\ell$
is used; it suffices for both applications below.
Thus \eqref{eq:normalized-selection} proves
\[
  \sum_{i\in S}|\langle g,\phi_i\rangle|^2
  \le C_r\left(\frac{\ell}{\lambda}
       +\beta_\delta(\lambda)
            \sum_{i=1}^N|\langle g,\phi_i\rangle|^2\right)
\]
for every $g$ of $L^{r'}$ norm at most one. If $g\ne0$, apply this
inequality to $\frac{g}{\|g\|_{r'}}$ and multiply by
$\|g\|_{r'}^2$. The case $g=0$ is immediate. This proves
\eqref{eq:selection} for all $g\in L^{r'}$ on the same event and
completes the proof of Lemma~\ref{lem:selection}.

\begin{remark}\label{rem:budget}
The estimate \eqref{eq:cumulative} is stronger than a separate bound
$k_j\le C_r\varepsilon_j^{-2}$ for each $j$. Those separate bounds
would give only
$\sum_j\varepsilon_j^2k_j\le C_rJ$. In the proof above, the potential
decreases throughout the whole procedure, and each update at scale
$h$ is paid for in the geometric sum via
$\sum_{j=h}^J\varepsilon_j^2\le\frac{4}{3}\varepsilon_h^2$;
this explains why no factor $J$ appears in
\eqref{eq:cumulative-budget}.
\end{remark}

\section{Proof of the selection theorem}\label{sec:main-proof}

We now prove Theorem~\ref{thm:main}. The case $N=1$ follows from
$\|\phi_1\|_p\le1$, so assume $N\ge2$. Set
\[
  r:=2p,\qquad
  \alpha:=1-\frac{2}{p}=\frac{p-2}{p},\qquad
  m:=\left\lceil N^{\frac{2}{p}}\right\rceil,
  \qquad\ell:=\log(eN).
\]
Thus $0<\alpha<1$, $1\le m\le N$, and
\begin{equation}\label{eq:density}
  \delta=\frac{m}{N}\le2N^{-\alpha};
\end{equation}
note that since $\mu$ is a probability measure,
$\|\phi_i\|_r\le\|\phi_i\|_\infty\le1$.

\subsection{Two estimates on one selected set}

Orthogonality gives the bound
\begin{equation}\label{eq:bessel}
  \sum_{i=1}^N|\langle g,\phi_i\rangle|^2\le\|g\|_2^2
  \qquad(g\in L^2).
\end{equation}
For completeness, discard any zero functions and put
$\psi_i=\frac{\phi_i}{\|\phi_i\|_2}$ for the remaining indices, so that $\{ \psi_i \}$ are orthonormal. Bessel's inequality and
$\|\phi_i\|_2\le1$ give
\[
  \sum_i|\langle g,\phi_i\rangle|^2
  =\sum_i\|\phi_i\|_2^2|\langle g,\psi_i\rangle|^2
  \le\sum_i|\langle g,\psi_i\rangle|^2
  \le\|g\|_2^2.
\]

Choose $S$ uniformly among the $m$-element subsets of $[N]$ and
apply Lemma~\ref{lem:selection} with the two fixed parameters
\[
  \lambda_1=1,
  \qquad\lambda_2=1+\frac{\alpha}{2}\log N,
\]
both of which belong to $[1,\ell]$. The union bound shows that the two
conclusions hold on the same set with probability at least
\begin{equation}\label{eq:two-event-probability}
  1-\frac{1}{4N}-\frac{1}{4N}
  =1-\frac{1}{2N}\ge1-\frac{1}{N}>0.
\end{equation}
Fix a set $S$ in this intersection. The two events need not be
independent. Each event gives its estimate for every test function,
and every remaining step of the proof is deterministic on their
intersection; no further probability loss occurs when the coefficients
or test functions are chosen.

Using \eqref{eq:beta-upper}, \eqref{eq:density}, and
\eqref{eq:bessel}, the first application
gives
\begin{equation}\label{eq:first-application}
  \sum_{i\in S}|\langle g,\phi_i\rangle|^2
  \le C_p\bigl(\ell\|g\|_{r'}^2
                   +N^{-\alpha}\|g\|_2^2\bigr)
  \qquad(g\in L^2).
\end{equation}
For the second application, observe that
\[
  \frac{\ell}{\lambda_2}\le\frac{2}{\alpha},
  \qquad
  \beta_\delta(\lambda_2)
  \le\frac{\delta(e^{\lambda_2}-1)}{\lambda_2}
  \le\frac{4e}{\alpha\ell}N^{-\frac{\alpha}{2}}.
\]
Here $\lambda_2\ge\frac{\alpha}{2}\ell$ because $\alpha<1$,
while \eqref{eq:density} gives
$\delta e^{\lambda_2}\le2eN^{-\frac{\alpha}{2}}$.
We obtain
\begin{equation}\label{eq:second-application}
  \sum_{i\in S}|\langle g,\phi_i\rangle|^2
  \le C_p\left(\|g\|_{r'}^2
                   +\frac{N^{-\frac{\alpha}{2}}}{\ell}
                       \|g\|_2^2\right)
  \qquad(g\in L^2).
\end{equation}
Since $r'<2$ and $\mu(X)=1$, every $L^2$ function belongs to
$L^{r'}$, so these uses of Lemma~\ref{lem:selection} are legitimate.

The parameter choice balances the two coefficients in
\eqref{eq:selection}: taking $\lambda=1$ keeps the coefficient of
the full coefficient sum bounded by a constant times the sampling
density, while taking $\lambda=\lambda_2$ makes
$\frac{\ell}{\lambda}$ bounded in terms of $p$ alone. Although
$e^{\lambda_2}$ grows with $N$, the product
$\delta e^{\lambda_2}$ still decreases as a negative power of $N$.
The factor $\frac{1}{\ell}$ retained in
\eqref{eq:second-application} is a further gain. These choices are
made to obtain the optimal cardinality exponent; no optimization of
the numerical constant $C_p$ is asserted, and each parameter is fixed
before sampling, as required by Lemma~\ref{lem:selection}.

\subsection{Distribution bounds for a selected linear combination}

Fix coefficients with $\sum_{i\in S}|a_i|^2\le1$, and put
\[
  f=\sum_{i\in S}a_i\phi_i,
  \qquad M=\sqrt{m};
\]
the bound on the functions and the Cauchy--Schwarz inequality imply
\begin{equation}\label{eq:maximum}
  \|f\|_\infty\le\sum_{i\in S}|a_i|
      \le\sqrt{m}=M
      \le\sqrt{2}\,N^{\frac{1}{p}}.
\end{equation}
For $t>0$, define
\[
  E_t=\{x\in X:|f(x)|>t\},\qquad F(t)=\mu(E_t),
\]
and define the test function
\begin{align}\label{e:gt}
  g_t(x)=
  \begin{cases}
    \frac{f(x)}{|f(x)|},&x\in E_t,\\
    0,&x\notin E_t,
  \end{cases}
\end{align}
where the first case is well-defined since in this case $|f(x)| > t>0$.
The function $g_t$ lies in $L^2$, and
\begin{equation}\label{eq:test-norms}
  \|g_t\|_{r'}^2=F(t)^{\frac{2}{r'}},
  \qquad\|g_t\|_2^2=F(t);
\end{equation}
the conjugations in the pairing yield the distributional estimate
\begin{align}
  tF(t)
  &\le\int_{E_t}|f|\,d\mu\notag\\
  &=\left|\int_X f\overline{g_t}\,d\mu\right|
   =\left|\sum_{i\in S}a_i
               \overline{\langle g_t,\phi_i\rangle}\right|\notag\\
  &\le\left(\sum_{i\in S}|\langle g_t,\phi_i\rangle|^2
                                      \right)^{\frac{1}{2}}.
                                      \label{eq:duality-tail}
\end{align}

We now use a simple algebraic observation: if $A,B,F\ge0$ and $t>0$
satisfy
\[
  t^2F^2\le AF^{\frac{2}{r'}}+BF,
\]
then
\begin{equation}\label{eq:tail-algebra}
  F\le(2A)^{\frac{r}{2}}t^{-r}+2Bt^{-2}.
\end{equation}
When $F=0$, there is nothing to prove. On the other hand, when $F>0$, at least one of the two terms on the right of the preceding inequality is at least
$\frac{1}{2}t^2F^2$. In the first case,
\[
  F^{\frac{2}{r}}
  =F^{2-\frac{2}{r'}}\le2At^{-2},
\]
which gives the first term of \eqref{eq:tail-algebra}; in the second
case, division by $F$ gives $F\le2Bt^{-2}$.

Square \eqref{eq:duality-tail}, apply
\eqref{eq:first-application} and \eqref{eq:second-application},
and use \eqref{eq:test-norms}: since $r=2p$, the observation yields
the two bounds
\begin{equation}\label{eq:first-tail}
  F(t)\le C_p\bigl(\ell^pt^{-2p}+N^{-\alpha}t^{-2}\bigr)
  \qquad(t>0)
\end{equation}
and
\begin{equation}\label{eq:second-tail}
  F(t)\le C_p\left(t^{-2p}
          +\frac{N^{-\frac{\alpha}{2}}}{\ell}t^{-2}\right)
  \qquad(t>0).
\end{equation}
Although $g_t$ depends on $f$ and on $S$, both applications are valid:
the estimates were established for every test function on the same
fixed set $S$.

\subsection{Integrating the two distribution bounds}

Tonelli's theorem applied to the identity
$|f(x)|^p=\int_0^{|f(x)|}pt^{p-1}\,dt$ gives
\begin{equation}\label{eq:layer-cake}
  \|f\|_p^p=p\int_0^M t^{p-1}F(t)\,dt.
\end{equation}
Here we used \eqref{eq:maximum}, which implies $F(t)=0$ for $t\ge M$.
We use $F(t)\le1$ for $0<t<1$, use
\eqref{eq:second-tail} for $1\le t\le\ell$, and use
\eqref{eq:first-tail} for $t\ge\ell$.

Extending nonnegative integrals gives
\begin{equation}\label{eq:integral-split}
\begin{split}
  \|f\|_p^p\le C_p\biggl(&1
    +\int_1^\ell t^{-p-1}\,dt
    +\frac{N^{-\frac{\alpha}{2}}}{\ell}
          \int_1^\ell t^{p-3}\,dt\\
    &+\ell^p\int_\ell^\infty t^{-p-1}\,dt
    +N^{-\alpha}\int_0^M t^{p-3}\,dt\biggr).
\end{split}
\end{equation}
Note that this estimate also covers $M\le\ell$ (in that case the actual
contribution from $t\ge\ell$ is zero, and the added integrals are
nonnegative); the last integral is finite because $p>2$.

Consequently,
\begin{equation}\label{eq:integrated}
  \|f\|_p^p
  \le C_p\left(1+N^{-\frac{\alpha}{2}}\ell^{p-3}
                   +N^{-\alpha}M^{p-2}\right).
\end{equation}
Both remaining quantities are bounded independently of $N$.
For an explicit check of the logarithmic term, set $x=\log N\ge0$, and note that since $\ell\ge1$, we have
\[
  N^{-\frac{\alpha}{2}}\ell^{p-3}
  \le N^{-\frac{\alpha}{2}}\ell^{p-2}
  =\left((1+x)e^{-\frac{x}{2p}}\right)^{p-2} \leq C_p
\]
since the function $(1+x)e^{-\frac{x}{2p}}$ on $[0,\infty)$ is maximized at
$x=2p-1$.
For the last term, \eqref{eq:maximum} gives
\[
  N^{-\alpha}M^{p-2}
  \le2^{\frac{p-2}{2}}
          N^{-\alpha+\frac{p-2}{p}}
  =2^{\frac{p-2}{2}}.
\]
We have proved $\|f\|_p\le C_p$ whenever
$\sum_{i\in S}|a_i|^2\le1$.
Rescaling a nonzero coefficient vector proves \eqref{eq:main}; the
zero vector is immediate. The set $S$ was fixed before the coefficients
were chosen, so the inequality holds for all coefficient vectors on
that set. Since this conclusion holds for every $S$ in the event
from \eqref{eq:two-event-probability}, we have also proved
\[
  \mathbb{P}\bigl(K_p(S)\le C_p\bigr)
  \ge1-\frac{1}{2N}\ge1-\frac{1}{N}.
\]
This proves both assertions of Theorem~\ref{thm:main}.

\begin{remark}
The two uses of Lemma~\ref{lem:selection} have distinct purposes.
In \eqref{eq:second-tail}, the coefficient of $t^{-2p}$ is independent
of $N$, which controls the integral up to $\ell$.
In \eqref{eq:first-tail}, the smaller coefficient $N^{-\alpha}$ of
$t^{-2}$ is exactly what is needed at the largest amplitude
$M\le\sqrt{2}N^{\frac{1}{p}}$. Above $\ell$, the factor $\ell^p$ in the
other term is canceled by integrating $t^{-p-1}$ from $\ell$ to
infinity. This is why the argument reaches the exponent $\frac{2}{p}$.
\end{remark}

\subsection{Proof of the polynomial probability bound}
\label{sec:polynomial-proof}

\begin{proof}[Proof of Corollary~\ref{cor:polynomial-probability}]
First suppose that $A\ge1$. Keep the deterministic approximation
procedure unchanged, and put
\[
  H_A=\log(8JN^A),\qquad J=\lceil\log_2N\rceil.
\]
For a fixed $\lambda\in[1,\ell]$, apply
\eqref{eq:fixed-set-tail} to every set $I(\mathcal{H},j)$ with
$u=(k+1)H_A$, where $k$ is the length of the history $\mathcal{H}$.
The union bound over all histories and scales is
\begin{align}
  J\sum_{k=0}^{\infty}(4NJ)^k e^{-(k+1)H_A}
  &=\frac{Je^{-H_A}}{1-4NJ e^{-H_A}}\notag\\
  &=\frac{\frac{1}{8N^A}}
          {1-\frac{1}{2}N^{1-A}}
    \le\frac{1}{4N^A}.
    \label{eq:polynomial-history-failure}
\end{align}
Here $N^{1-A}\le1$, so the geometric series converges and its
denominator is at least $\frac{1}{2}$. Outside an event of probability
at most $\frac{1}{4N^A}$, we therefore have
\[
  |S\cap I(\mathcal{H},j)|
  \le\beta_\delta(\lambda)|I(\mathcal{H},j)|
       +\frac{(k+1)H_A}{\lambda}
\]
simultaneously for every history and scale.

The passage from histories to squared coefficients uses only this
inequality and the deterministic estimates
\eqref{eq:cumulative-budget} and \eqref{eq:mean-levels}. Thus
\eqref{eq:normalized-selection} becomes, for every
$g\in L^{r'}$ with $\|g\|_{r'}\le1$,
\[
  \sum_{i\in S}|\langle g,\phi_i\rangle|^2
  \le\frac{9}{N}
       +48\beta_\delta(\lambda)
           \sum_{i=1}^N|\langle g,\phi_i\rangle|^2
       +12(8r-7)\frac{H_A}{\lambda}.
\]
Since $J\le N$ and $A\ge1$,
\[
  H_A\le\log 8+(A+1)\log N
       \le(A+2)\log(eN)=(A+2)\ell.
\]
Using $\lambda\le\ell$ to absorb $\frac{9}{N}$, and then rescaling
$g$, proves
\begin{equation}\label{eq:selection-polynomial}
\begin{split}
  \sum_{i\in S}|\langle g,\phi_i\rangle|^2
  \le C_{r,A}\biggl(&\frac{\ell}{\lambda}\|g\|_{r'}^2 +\beta_\delta(\lambda)
           \sum_{i=1}^N|\langle g,\phi_i\rangle|^2\biggr)
\end{split}
\end{equation}
simultaneously for all $g\in L^{r'}$, on an event of probability at
least $1-\frac{1}{4N^A}$. The constant $C_{r,A}$ is independent of
$N$, $m$, $\lambda$, and the functions.

Now set $r=2p$, $\alpha=1-\frac{2}{p}$, and
$m=\lceil N^{\frac{2}{p}}\rceil$, as in the proof of
Theorem~\ref{thm:main}. Apply \eqref{eq:selection-polynomial} with
the same two fixed parameters
\[
  \lambda_1=1,\qquad
  \lambda_2=1+\frac{\alpha}{2}\log N.
\]
Their events intersect with probability at least
$1-\frac{1}{2N^A}$. On this intersection, the calculations leading to
\eqref{eq:first-application} and \eqref{eq:second-application} give,
for every $g\in L^2$,
\[
  \sum_{i\in S}|\langle g,\phi_i\rangle|^2
  \le C_{p,A}\bigl(\ell\|g\|_{r'}^2
                         +N^{-\alpha}\|g\|_2^2\bigr)
\]
and
\[
  \sum_{i\in S}|\langle g,\phi_i\rangle|^2
  \le C_{p,A}\left(\|g\|_{r'}^2
                  +\frac{N^{-\frac{\alpha}{2}}}{\ell}
                       \|g\|_2^2\right).
\]
For any coefficient vector with $\sum_{i\in S}|a_i|^2\le1$, define
$f$, $M$, and $F(t)$ as before. The same test function $g_t$ and
\eqref{eq:tail-algebra} yield
\[
  F(t)\le C_{p,A}
            \bigl(\ell^pt^{-2p}+N^{-\alpha}t^{-2}\bigr)
\]
and
\[
  F(t)\le C_{p,A}
            \left(t^{-2p}
               +\frac{N^{-\frac{\alpha}{2}}}{\ell}t^{-2}\right).
\]
Integrating over the same ranges as in
\eqref{eq:integral-split} gives
\[
  \|f\|_p^p
  \le C_{p,A}\left(
        1+N^{-\frac{\alpha}{2}}\ell^{p-3}
          +N^{-\alpha}M^{p-2}\right)
  \le C_{p,A}.
\]
The last inequality follows from the explicit bounds after
\eqref{eq:integrated}, which depend only on $p$. Taking the
$p$th root and adjusting $C_{p,A}$ proves $K_p(S)\le C_{p,A}$ on
the same event. Hence
\[
  \mathbb{P}\bigl(K_p(S)\le C_{p,A}\bigr)
  \ge1-\frac{1}{2N^A}\ge1-\frac{1}{N^A}.
\]
All coefficient vectors are handled on this one event.

If $0<A<1$, the assertion follows from Theorem~\ref{thm:main},
because $N^{-1}\le N^{-A}$. This proves the corollary for every
$A>0$.
\end{proof}

\section{Complements}\label{sec:complements}
\subsection{Fourier selection and the sharp exponent}\label{sec:fourier}
The following corollary is the specialization of
Theorem~\ref{thm:main} and Corollary~\ref{cor:polynomial-probability}
to the orthonormal characters on the torus.
\begin{corollary}\label{cor:fourier}
Let $p>2$, and let $A\subset\mathbb{Z}$ be a finite set of
cardinality $N\ge1$. There is a subset $\Lambda\subset A$ of
cardinality $\lceil N^{\frac{2}{p}}\rceil$ such that
\[
  \left\|\sum_{n\in\Lambda}a_ne^{2\pi\mathrm{i}nx}
       \right\|_{L^p(\mathbb{T})}
  \le C_p\left(\sum_{n\in\Lambda}|a_n|^2\right)^{\frac{1}{2}}
\]
for all complex coefficients, with $C_p$ independent of $A$ and $N$.
If $N\ge2$, a uniformly chosen subset $\Lambda\subset A$ of this
cardinality satisfies the inequality simultaneously for all
coefficients with probability at least $1-\frac{1}{N}$. More
generally, for every fixed $b>0$, the success probability is at least
$1-\frac{1}{N^b}$ after replacing $C_p$ by a constant $C_{p,b}$.
\end{corollary}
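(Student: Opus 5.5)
The plan is to specialize Theorem~\ref{thm:main} and Corollary~\ref{cor:polynomial-probability} to the torus. Take $(X,\mu)=(\mathbb{T},dx)$, which is a probability space. Enumerate $A=\{n_1,\ldots,n_N\}$ with distinct integers, and set $\phi_i(x)=e^{2\pi\mathrm{i}n_ix}$.

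First I would check the hypotheses. Each $\phi_i$ has modulus one, so $\|\phi_i\|_\infty\le1$. For $i\ne j$, pairwise orthogonality is the computation $\int_{\mathbb{T}}e^{2\pi\mathrm{i}(n_i-n_j)x}\,dx=0$, which holds because $n_i-n_j$ is a nonzero integer. This is the only place where distinctness of the frequencies enters.

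The bijection $i\mapsto n_i$ identifies subsets $S\subset[N]$ with subsets $\Lambda=\{n_i:i\in S\}\subset A$ of the same cardinality. It therefore carries the uniform distribution on $\lceil N^{2/p}\rceil$-element subsets of $[N]$ to the uniform distribution on $\lceil N^{2/p}\rceil$-element subsets of $A$. Under this identification, coefficients $(a_i)_{i\in S}$ correspond to $(a_{n_i})$, and the inequality \eqref{eq:main} becomes exactly the displayed Fourier inequality.

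From there the three statements follow in turn.
\begin{itemize}
\item Existence for every $N\ge1$ is the first assertion of Theorem~\ref{thm:main}. For $N=1$ the single character has $L^p$ norm one, so no separate argument is needed.
\item The probability $1-\frac{1}{N}$ for $N\ge2$ is the second assertion of Theorem~\ref{thm:main}.
\item The bound $1-N^{-b}$ is Corollary~\ref{cor:polynomial-probability} with $A$ replaced by $b$, giving $C_{p,b}$. The renaming is needed because in this corollary $A$ denotes the frequency set.
\end{itemize}

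The constants are independent of $A$ and $N$ because the constants in the cited results depend only on $p$ (and on $b$), and not on the system or the probability space. There is no real obstacle here. The only point needing care is the bookkeeping: the induced distribution on subsets of $A$ must be uniform, and the event must be stated simultaneously for all coefficients. Both transfer directly, since the event $K_p(S)\le C$ depends only on the selected set.
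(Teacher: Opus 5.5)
Your proposal is correct and follows the paper's route: the paper presents this corollary simply as the specialization of Theorem~\ref{thm:main} and Corollary~\ref{cor:polynomial-probability} to the characters $e^{2\pi\mathrm{i}nx}$, $n\in A$. Your checks of boundedness and orthogonality for distinct frequencies, the transfer of the uniform distribution through $i\mapsto n_i$, and the renaming of the corollary's exponent to $b$ supply exactly the details that specialization needs.
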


The cardinality exponent is sharp in the worst case, already for
$A=[N]$, by the standard estimate near the origin for trigonometric
sums with equal coefficients; see \cite{Bourgain,Rudin}.
\subsection{Selection under \texorpdfstring{$L^r$}{Lr} normalization}
\label{sec:lr-normalization}

The pointwise bound in \eqref{eq:maximum} can be replaced by a
distributional estimate under $L^r$ normalization. The comparison with
earlier probability estimates is discussed in
Section~\ref{sec:probability-introduction}.

Agaev \cite[Theorem 1]{Agaev} established sharp selection for orthonormal systems uniformly bounded in $L^r$ when $p>2$ is an even integer and $p<r\le2p-2$. The obstruction determining this exponent
goes back to Gaposhkin, as reproduced in \cite[Theorem 5]{Agaev}
and recorded in \cite[Theorem D]{Limonova}. The existence assertion
for general $2<p<r$ also follows by applying Talagrand's general
restriction theorem in $L^r$ and its accompanying decomposition;
see \cite[Theorem 19.3.8 and Lemma 19.3.11]{TalagrandBook}.

\begin{theorem}[$L^r$-normalized selection]
\label{thm:lr-selection}
Let $2<p<r<\infty$, let $A>0$, and let $N\ge2$. Suppose that
$\phi_1,\ldots,\phi_N$ are pairwise orthogonal functions on a probability
space $(X,\mu)$ satisfying $\|\phi_i\|_r\le1$ for every $i\in[N]$.
Set
\[
  \theta=\frac{2(r-p)}{p(r-2)},
  \qquad m=\left\lceil N^\theta\right\rceil.
\]
There is a constant $C_{p,r,A}$, depending only on $p,r,A$, such that
a uniformly chosen $m$-element subset $S\subset[N]$ satisfies
\begin{equation}\label{eq:lr-selection-conclusion}
  \left\|\sum_{i\in S}a_i\phi_i\right\|_p
  \le C_{p,r,A}\left(\sum_{i\in S}|a_i|^2\right)^{\frac12}
\end{equation}
simultaneously for all complex coefficient vectors, with probability
at least $1-N^{-A}$. In particular, such a subset exists with a
constant depending only on $p$ and $r$.
\end{theorem}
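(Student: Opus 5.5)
The plan is to repeat the proof of Theorem~\ref{thm:main} with Lemma~\ref{lem:selection} applied at exponent $r$ itself, not $2p$. The lemma only uses $\|\phi_i\|_r\le1$. Two things change. The pointwise bound \eqref{eq:maximum} is replaced by a Chebyshev tail bound, and the two fixed parameters $\lambda_1,\lambda_2$ are replaced by a family of about $\log N$ parameters, one for each dyadic amplitude range. First I would run the polynomial version \eqref{eq:selection-polynomial} with exponent $A+1$ (say) for each parameter $\lambda$ in a fixed list $\Lambda_N\subset[1,\ell]$ of cardinality $O(\log N)$. The union bound then gives an event of probability at least $1-C\log N\cdot N^{-A-1}\ge1-N^{-A}$ for $N$ large; small $N$ is absorbed into the constant via the trivial bound $\|f\|_p\le\|f\|_r\le\sqrt m$. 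On this event, \eqref{eq:selection-polynomial} and Bessel's inequality \eqref{eq:bessel} hold simultaneously for all $g$ and all $\lambda\in\Lambda_N$.

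Fix $f=\sum_{i\in S}a_i\phi_i$ with $\sum|a_i|^2\le1$, and let $F(t)=\mu(|f|>t)$. Testing with $g_t$ as in \eqref{e:gt}, using \eqref{eq:duality-tail} and \eqref{eq:tail-algebra} with $\|g_t\|_{r'}^2=F^{2/r'}$, gives for each $\lambda\in\Lambda_N$
\[
  F(t)\le C\Bigl(\bigl(\tfrac{\ell}{\lambda}\bigr)^{\frac r2}t^{-r}+\beta_\delta(\lambda)t^{-2}\Bigr),
  \qquad \beta_\delta(\lambda)\le\frac{\delta e^{\lambda}}{\lambda}.
\]
Separately, $\|f\|_r\le\sum|a_i|\le\sqrt m$ gives $F(t)\le m^{r/2}t^{-r}$. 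Put $T=m^{r/(2(r-p))}$. The choice of $\theta$ is exactly what makes $\delta T^{p-2}\le C$: indeed $m^{1+r(p-2)/(2(r-p))}=m^{1/\theta}\approx N$. On $t\ge T$, integrating $pt^{p-1}m^{r/2}t^{-r}$ gives $m^{r/2}T^{p-r}\lesssim1$. On $t\le1$, we simply use $F\le1$.

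For $t\in[T2^{-j-1},T2^{-j}]$ with $0\le j\le\log_2T$, I would use $\lambda_j=\max\{1,cj\}$ with a fixed $0<c<(p-2)\log2$; note $\lambda_j\le\ell$ since $\log T\lesssim\ell$. The contribution of the $\beta$ term is at most about $e^{cj}\delta T^{p-2}2^{-j(p-2)}$, which decays geometrically in $j$. The contribution of the first term is at most about
\[
  \Bigl(\frac{\ell}{\lambda_j}\Bigr)^{\frac r2}T^{p-r}2^{j(r-p)}
  =\Bigl(\frac{\ell}{\lambda_j}\Bigr)^{\frac r2}m^{-\frac r2}2^{j(r-p)}.
\]
Writing $j_0=\log_2T$, this equals $(\ell/\lambda_j)^{r/2}2^{-(j_0-j)(r-p)}$. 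The factor $2^{-(j_0-j)(r-p)}$ is geometric in $j_0-j$, and $\ell/\lambda_j$ is bounded by $C_{p,r}$ once $j$ is within a constant multiple of $\log m\asymp\ell$ of $j_0$. Summing over $j$ should therefore give $\|f\|_p^p\le C_{p,r,A}$.

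The main obstacle is this last summation. A two-parameter scheme as in Section~\ref{sec:main-proof} fails here: with $\lambda\approx1$, the factor $\ell^{r/2}$ is no longer cancelled, because $r<2p$ is allowed; with $\lambda\approx\gamma\log N$, one loses $N^{\gamma}$ against the critical density. The care is needed where $j$ is small compared to $j_0$, so that $\lambda_j$ is small while $\ell/\lambda_j$ is large. There I must check that $2^{-(j_0-j)(r-p)}$ beats $(\ell/\lambda_j)^{r/2}$. This needs $j_0\gtrsim\ell$, i.e.\ $m$ at least a power of $N$, which holds since $\theta>0$; for $j\le j_0/2$, say, the geometric factor is at most $N^{-c'}$, which dominates $\ell^{r/2}$. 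I also need to confirm that the number of parameters is $O(\log N)$, so that the union bound costs only a factor absorbed by raising the failure exponent.
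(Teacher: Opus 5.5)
Your argument is correct, but it takes a different and heavier route than the paper. The reason you give for needing it is mistaken: the paper proves this theorem with exactly two parameters, $\lambda_1=1$ and $\lambda_2=1+\frac{\alpha}{2}\log N$ with $\alpha=1-\theta$.

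\begin{itemize}
\item The factor $\ell^{\frac r2}$ coming from $\lambda_1$ does not have to be cancelled at $t=\ell$. The paper moves the crossover to $L=\ell^{\frac{r}{2(r-p)}}$, where $\ell^{\frac r2}\int_L^\infty t^{p-1-r}\,dt=\frac{1}{r-p}$.
\item On $(1,L)$ the paper uses the $\lambda_2$ bound $F(t)\lesssim t^{-r}+\frac{N^{-\alpha/2}}{\ell}t^{-2}$. Its cost is of order $N^{-\frac{\alpha}{2}}L^{p-2}$, which is bounded because $L$ is a fixed power of $\log(eN)$.
\item Above $L$, the paper pairs the $\delta t^{-2}$ term from $\lambda_1$ with your Chebyshev bound $m^{\frac r2}t^{-r}$ via $\min\{a+b,c\}\le a+\min\{b,c\}$. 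It then integrates the minimum exactly, splitting at $\sqrt m\,N^{\frac{1}{r-2}}$, which is of the same order as your $T$.
\end{itemize}
So $\lambda_2$ is never used near the critical amplitude, and that is the only place your objection bites. Likewise, the $\ell^{\frac r2}$ loss at $\lambda\approx1$ is harmless once the crossover is a suitable power of $\ell$ rather than $\ell$ itself.

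Your multi-scale choice $\lambda_j\approx cj$ also works. The key facts check out: $\delta T^{p-2}=m^{1/\theta}/N\le 2^{1/\theta}$, and the $\beta$-contributions decay like $e^{j(c-(p-2)\log 2)}$. The $t^{-r}$-contributions, of order $(\ell/\lambda_j)^{\frac r2}2^{-(j_0-j)(r-p)}$, sum to $O(1)$ by the split at $j_0/2$ you describe.

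One small repair is needed. The claim $\lambda_j\le\ell$ does not follow from $\log T\lesssim\ell$ for every $N$ when $r$ is close to $p$. The reason is that $\log_2 T$ contains an additive term up to $\frac{r}{2(r-p)}$ coming from the rounding in $m=\lceil N^\theta\rceil$. For example, with $p=3$, $r=3.1$, $N=2$ and $c$ near $(p-2)\log 2$, your largest $\lambda_j$ is about $10$, while $\ell\approx1.7$. You can fix this by capping $\lambda_j$ at $\ell$, by shrinking $c$ depending on $p,r$, or by putting those $N$ into your trivial small-$N$ case.

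As for what each route buys: the paper needs only two events, so it gets failure probability $\frac{1}{2N^A}$ directly, with no raised exponent and no separate treatment of small $N$. Your route adapts $\lambda$ to every dyadic amplitude range, at the price of an $O(\log N)$-fold union bound. Both arguments rest on the same slack, namely a polylogarithmic loss beaten by a power of $N$, so the adaptivity buys no additional generality here.
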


\begin{proof}
It suffices to consider $A\ge1$. Set
\[
  \alpha=1-\theta,\qquad \delta=\frac{m}{N},\qquad
  \ell=\log(eN),\qquad
  \lambda_1=1,\qquad \lambda_2=1+\frac{\alpha}{2}\log N.
\]
Then $0<\alpha<1$, $\delta\le2N^{-\alpha}$, and
$1\le\lambda_1,\lambda_2\le\ell$.
Orthogonality and $\|\phi_i\|_2\le\|\phi_i\|_r\le1$ give
\eqref{eq:bessel}. Apply \eqref{eq:selection-polynomial} with
$\lambda_1,\lambda_2$. The estimates for
$\beta_\delta(\lambda_1)$ and $\beta_\delta(\lambda_2)$ used in
Section~\ref{sec:main-proof} apply with this value of $\alpha$.
Testing against $g_t$ from \eqref{e:gt}, as in
\eqref{eq:duality-tail} and \eqref{eq:tail-algebra}, gives the
following bounds on one event of probability at least
$1-\frac{1}{2N^A}$. For every coefficient vector with
$\sum_{i\in S}|a_i|^2\le1$, put
\[
  f=\sum_{i\in S}a_i\phi_i,\qquad
  F(t)=\mu\{|f|>t\}\quad(t>0).
\]
Then
\begin{equation}\label{eq:lr-first-tail}
  F(t)\le C_{p,r,A}\min\left\{
       \ell^{\frac r2}t^{-r}+\delta t^{-2},\;
       t^{-r}+\frac{N^{-\frac{\alpha}{2}}}{\ell}t^{-2}
       \right\}.
\end{equation}
In addition, $\|f\|_r\le\sum_{i\in S}|a_i|\le\sqrt m$ gives
$F(t)\le m^{\frac r2}t^{-r}$.
Combining this with the first bound in \eqref{eq:lr-first-tail},
using $\min\{a+b,c\}\le a+\min\{b,c\}$ for nonnegative $a,b,c$,
gives
\[
  F(t)\le C_{p,r,A}\left(
      \ell^{\frac r2}t^{-r}
      +\min\{\delta t^{-2},m^{\frac r2}t^{-r}\}\right).
\]

Set $L=\ell^{\frac{r}{2(r-p)}}$. In the layer-cake integral, use
$F(t)\le1$ below $1$, the second bound in
\eqref{eq:lr-first-tail} on $(1,L)$, and the preceding bound above
$L$. Since $r>p$ and $\ell^{\frac r2}L^{p-r}=1$, this yields
\[
  \|f\|_p^p\le C_{p,r,A}\left(
     1+\frac{N^{-\frac{\alpha}{2}}}{\ell}L^{p-2}
     +\int_0^\infty t^{p-1}
       \min\{\delta t^{-2},m^{\frac r2}t^{-r}\}\,dt\right).
\]
The middle term is bounded uniformly in $N$ because $\alpha>0$
and $L$ is a fixed power of $\log(eN)$. The two terms in the
minimum agree at $T=\sqrt m\,N^{\frac{1}{r-2}}$. Splitting there gives
\begin{align*}
  \int_0^\infty t^{p-1}
       \min\{\delta t^{-2},m^{\frac r2}t^{-r}\}\,dt
  &=\frac{\delta T^{p-2}}{p-2}
       +\frac{m^{\frac r2}T^{p-r}}{r-p}\\
  &=\left(\frac{1}{p-2}+\frac{1}{r-p}\right)
       m^{\frac p2}N^{-\frac{r-p}{r-2}}\le C_{p,r},
\end{align*}
where the last step uses $m\le2N^\theta$ and
$\frac{\theta p}{2}=\frac{r-p}{r-2}$. Thus $\|f\|_p\le C_{p,r,A}$
for every normalized coefficient vector on the same event. Rescaling
proves the theorem, and taking $A=1$ gives its existence assertion.
\end{proof}

The optimal cardinality exponent is already present in Gaposhkin's
example, reproduced in \cite[Theorem 5]{Agaev} and restated in
\cite[Theorem D]{Limonova}.

\subsection{Signal recovery}\label{sec:recovery}

The selection estimate gives an explicit quantification of the
recovery framework of Iosevich and Mayeli \cite{IosevichMayeli}.
The deterministic implications use an uncertainty principle and the
strict null-space condition for recovery by minimization of the
$\ell^1$ norm; these are classical mechanisms
\cite{DonohoStark,IosevichMayeli}. We record the consequences without
repeating their standard proofs.

Fix integers $N\ge2$ and $d\ge1$, and put
$G=\mathbb{Z}_N^d$, where $\mathbb{Z}_N=\mathbb{Z}/N\mathbb{Z}$,
and $D=|G|=N^d$. A signal is a function $h:G\to\mathbb{C}$.
We use normalized averages and the unitary Fourier transform
\[
  \mathbb{E}_{x\in G}u(x)=\frac{1}{D}\sum_{x\in G}u(x),
  \qquad
  \widehat h(\xi)=D^{-\frac{1}{2}}
       \sum_{x\in G}e^{-\frac{2\pi\mathrm{i}x\cdot\xi}{N}}h(x),
\]
where $x\cdot\xi$ is the coordinate dot product modulo $N$.
Write $\operatorname{supp}(h)=\{x\in G:h(x)\ne0\}$ and
$\|h\|_{\ell^1(G)}=\sum_{x\in G}|h(x)|$.
A signal is $s$-sparse if its support has at most $s$ elements.
For a set $S\subset G$ of missing frequencies, the observed data
are $\widehat h$ on $S^c=G\setminus S$. Define
\[
  \mathcal{V}_S=
  \{h:G\to\mathbb{C}:\operatorname{supp}(\widehat h)\subset S\}.
\]
Thus two signals have the same observed data exactly when their
difference belongs to $\mathcal{V}_S$.

Applying Corollary~\ref{cor:polynomial-probability} to the $D$
orthonormal characters on $G$ with normalized counting measure
and using Parseval gives the following consequence.

\begin{lemma}\label{lem:finite-selected-space}
Fix $q>2$ and $A>0$. There is a constant $K=K(q,A)\ge1$,
independent of $N$ and $d$, such that a uniformly chosen subset
$S\subset G$ of cardinality $\lceil D^{\frac{2}{q}}\rceil$
satisfies
\begin{equation}\label{eq:finite-lambda}
  \left(\mathbb{E}_{x\in G}|h(x)|^q\right)^{\frac{1}{q}}
  \le K\left(\mathbb{E}_{x\in G}|h(x)|^2\right)^{\frac{1}{2}}
  \qquad(h\in\mathcal{V}_S)
\end{equation}
on an event of probability at least $1-D^{-A}=1-N^{-dA}$.
The inequality holds for all $h\in\mathcal{V}_S$ on the same event.
\end{lemma}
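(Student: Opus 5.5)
We apply Corollary~\ref{cor:polynomial-probability} with $p=q$, with the given $A$, and with the roles of $N$ and the functions replaced as follows. The probability space is $X=G$ with normalized counting measure $\mu$, so that $\int_X u\,d\mu=\mathbb{E}_{x\in G}u(x)$. The $D$ functions are the characters
\[
  \chi_\xi(x)=e^{\frac{2\pi\mathrm{i}x\cdot\xi}{N}},\qquad \xi\in G,
\]
indexed by $G$ after fixing any bijection $G\to[D]$.

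These functions are pairwise orthogonal with respect to $\mu$, by the standard character orthogonality $\mathbb{E}_{x\in G}\chi_\xi(x)\overline{\chi_\eta(x)}=1_{\{\xi=\eta\}}$. They satisfy $|\chi_\xi|=1$, so $\|\chi_\xi\|_\infty\le1$. Since $D=N^d\ge2$, the corollary applies with $D$ in place of $N$. A uniformly chosen $S\subset G$ of cardinality $\lceil D^{\frac2q}\rceil$ then satisfies $K_q(S)\le C_{q,A}$ with probability at least $1-D^{-A}$. On this event,
\[
  \Bigl(\mathbb{E}_{x\in G}\Bigl|\sum_{\xi\in S}a_\xi\chi_\xi(x)\Bigr|^q\Bigr)^{\frac1q}
  \le C_{q,A}\Bigl(\sum_{\xi\in S}|a_\xi|^2\Bigr)^{\frac12}
\]
holds simultaneously for all complex coefficients $(a_\xi)_{\xi\in S}$.

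Next we identify $\mathcal{V}_S$ with these sums. Fourier inversion for the unitary transform gives
\[
  h(x)=D^{-\frac12}\sum_{\xi\in G}\widehat h(\xi)\chi_\xi(x).
\]
For $h\in\mathcal{V}_S$ the sum runs only over $\xi\in S$, so $h=\sum_{\xi\in S}a_\xi\chi_\xi$ with $a_\xi=D^{-\frac12}\widehat h(\xi)$. Parseval for the unitary transform gives $\sum_x|h(x)|^2=\sum_\xi|\widehat h(\xi)|^2$, so
\[
  \sum_{\xi\in S}|a_\xi|^2=D^{-1}\sum_{\xi}|\widehat h(\xi)|^2=\mathbb{E}_{x\in G}|h(x)|^2 .
\]
The right-hand side of the displayed inequality is therefore exactly $C_{q,A}\bigl(\mathbb{E}_x|h|^2\bigr)^{\frac12}$. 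This gives \eqref{eq:finite-lambda} with $K=\max\{1,C_{q,A}\}$ for every $h\in\mathcal{V}_S$, on the single event provided by the corollary. The probability bound is $1-D^{-A}=1-N^{-dA}$.

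The only points requiring care are bookkeeping ones. First, the constant $C_{q,A}$ from Corollary~\ref{cor:polynomial-probability} depends only on $q$ and $A$, and not on the number $D$ of functions or on the probability space; this is why $K$ is independent of both $N$ and $d$. Second, the normalization of $\widehat h$ (the factor $D^{-\frac12}$) must be matched against the averaged norms, as done above, so that no power of $D$ enters $K$. Third, the event comes from the corollary with one fixed $S$ and covers all coefficient vectors at once, so the bound holds for all of $\mathcal{V}_S$ simultaneously. I expect no genuine obstacle beyond this normalization check.
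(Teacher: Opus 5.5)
Your proposal is correct and matches the paper's own argument. You apply Corollary~\ref{cor:polynomial-probability} with $D\ge2$ in place of $N$ to the characters of $G$ under normalized counting measure, and use Fourier inversion and Parseval to identify $\sum_{\xi\in S}|a_\xi|^2$ with $\mathbb{E}_{x\in G}|h(x)|^2$, with $K=\max\{1,C_{q,A}\}$ as the only extra bookkeeping.
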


The standard uncertainty and null-space implications of
\eqref{eq:finite-lambda} also give a quantitative refinement of the recovery statement
\cite[Corollary 4.2]{IosevichMayeli}, see also \cite{DonohoStark}, replacing the qualitative success probability $1-o(1)$ with a super-polynomial bound.




\section*{Acknowledgments}

Alex Iosevich acknowledges partial support from the National Science
Foundation under grant DMS-2506858. Ben Krause acknowledges partial
support from the Engineering and Physical Sciences Research Council
through New Investigator Award EP/W010275/2 and from UK Research and
Innovation through Horizon Europe Guarantee grant EP/Y007336/1 for
his ERC Starting Grant project PCMEA (Pointwise Convergence of Multiple
Ergodic Averages).

\end{document}